\newtheorem{Theorem}{Theorem}[section]
\newtheorem{Definition}{Definition}[section]
\newtheorem{Lemma}{Lemma}[section]
\newtheorem{Corollary}{Corollary}[section]
\newtheorem{algorithm}{Algorithm}
\newcommand{\Section}[1]{
        \par
        \stepcounter{section}
        \settowidth{\hangindent}{\large\bf\thesection.~}
        \hangafter=1
        \bigskip\bigskip\noindent
        {\large\bf\hbox{\thesection.~}#1}\par
        \nopagebreak
        \medskip
}
\newcommand{\alglist}{
\begin{list}{Step 1}
{\setlength{\leftmargin}{1.1 in}\setlength{\labelwidth}{1.0 in}} }
\newenvironment{proof}{{\bf Proof.}\,}{\hfill$\hbox{\rule{5pt}{5pt}}$\\}
\begin{document}
\title{$M$-tensors and  The Positive Definiteness of a Multivariate Form
}

\author{ Liping Zhang$^a$\thanks{The author's  work was supported by the
National Natural Science Foundation of China(Grant No. 10871113).
  \emph{lzhang@math.stinghua.edu.cn}} \quad\quad
Liqun Qi$^b$\thanks{The author's work was supported by the Hong Kong
Research Grant Council. \emph{maqilq@polyu.edu.hk}}\quad\quad
Guanglu Zhou$^c$\thanks{Email address: \emph{G. Zhou@curtin.edu.au}}
\\
{\small $^a$ Department of Mathematical Sciences, Tsinghua University, Beijing, China}\\
{\small $^b$ Department of Applied Mathematics, The Hong Kong
Polytechnic University, Hong Kong}\\
{\small $^c$ Department of Mathematics and Statistics, Curtin
University, Perth, Australia.}
 }

\date{}
\maketitle

{\bf Abstract.} We study $M$-tensors and various properties of $M$-tensors are given. Specially, we show that
the smallest real eigenvalue of $M$-tensor is positive corresponding to a nonnegative eigenvector.
We propose an algorithm to find the smallest positive eigenvalue and
then apply the property to study the positive definiteness of a
multivariate form. \vspace{2mm}

{\bf Key words.} $M$-tensors, eigenvalue, algorithm, positive
definiteness.

{\bf AMS subject classifications. }  65F15, 65F10, 15A18, 15A69

\vspace{2mm}


\Section{Introduction}

$M$-matrices \cite{DingZhou,Horn,Varge} are known to have many applications in modeling dynamic systems in economics,
ecology, and engineering \cite{ela21}. Various properties of $M$-matrices were used in establishing stability results for dynamic systems
in general \cite{ela21,ela22}. Especially, it follows from the famous
Perron-Frobenius theorem for nonnegative matrices that for any
$M$-matrix $A$, all real eigenvalues of $A$ are positive
\cite{Horn,Varge}. The theory of $M$-matrices has many applications
in many fields such as computational mathematics, mathematical
physics, mathematical economics, wireless communications, etc.
\cite{DingZhou,Varge}.  Since avoidance conditions were linked to a stability type of result via Lyapunov-type functions, the theory of $M$-matrices was also used to
certify avoidance conditions \cite{ela}. On the other hand, testing positive definiteness of
a multivariate form is an important problem in the stability study of nonlinear autonomous systems via Lyapunovs direct method in automatic control \cite{LZI18}.
Researchers in automatic control studied the conditions of such positive definiteness intensively  \cite{LZI2, LZI3,LZI5,LZI11}. For $n\ge 3$ and $m\ge 4$, this problem
is a hard problem in mathematics. There are only a few methods to solve the problem \cite{LZI3,LZI5,LZI18}. In practice, when $n>3$ and $m\ge 4$, these methods are computationally expensive. Note that an eigenvalue method was proposed in \cite{LZI18} to solve the problem. In \cite{LZI}, a method for computing the largest eigenvalue of an irreducible nonnegative tensor was applied to test the positive definiteness of a class of of multivariate forms. Motivated by these applications and methods,  we study higher-order $M$-tensors in this paper. We show that all real eigenvalues of an $M$-tensor are positive. Hence, testing the positive definiteness of an even-order multivariate form is equivalent to testing a tensor is an $M$-tensor.

 A tensor can be regarded as a higher-order generalization of a matrix, which
takes the form
\begin{displaymath}
\mathcal{A}=\left(A_{i_1\cdots i_m}\right),\quad A_{i_1\cdots i_m}
\in R,\quad 1\le i_1,\ldots,i_m\le n.
\end{displaymath}
Such a multi-array $\mathcal{A}$ is said to be an {\it$m$-order
$n$-dimensional square real tensor} with $n^m$ entries $A_{i_1\cdots
i_m}$. In this regard, a vector is a first-order tensor and a matrix
is a second-order tensor. Tensors of order more than two are called
higher-order tensors. $M$-tensor we defined in this paper is
ultimately related to the nonnegative tensor. It is a higher order
generalization of the so-called $M$-matrix.

Nonnegative tensors, arising from multilinear pagerank \cite{s2},
spectral hypergraph theory \cite{BP}, and higher-order Markov chains
\cite{s12}, etc., form a singularly important class of tensors and
have attracted more and more attention since they share some
intrinsic properties with those of the nonnegative matrices. One of
those properties is the Perron-Frobenius theorem on eigenvalues. In
\cite{s1}, Chang, Pearson, and Zhang generalized the
Perron-Frobenius theorem from nonnegative matrices to irreducible
nonnegative tensors. Later, Yang and Yang \cite{s11} generalized the
weak Perron-Frobenius theorem to general nonnegative tensors, and
proved that the spectral radius is an eigenvalue of a nonnegative
tensor. Numerical methods for finding the spectral radius of
nonnegative tensors are subsequently proposed. Ng, Qi, and Zhou
\cite{s12} provided an iterative method to find the largest
eigenvalue of an irreducible nonnegative tensor. The Ng-Qi-Zhou
method is efficient but it is not always convergent for irreducible
nonnegative tensors. Zhang and Qi \cite{ZhangQ} established an
explicit linear convergence rate of the Ng-Qi-Zhou method for
essentially positive tensors \cite{pear}.  Liu, Zhou and Ibrahim
\cite{LZI} modified the Ng-Qi-Zhou method such that the modified
algorithm is always convergent for finding the largest eigenvalue of
an irreducible nonnegative tensor. The linear convergence rate of
the algorithm was established in \cite{ZhangQX} for weakly positive
tensors. Chang, Pearson and Zhang \cite{CPZ2} introduced primitive
tensors and established convergence of the Ng-Qi-Zhou method for
primitive tensors.  Friedland, Gaubert and Han \cite{FGH} pointed
out that the Perron-Frobenius theorem for nonnegative tensors has a
very close link with the Perron-Frobenius theorem for homogeneous
monotone maps, initiated by Nussbaum \cite{Nu1, Nu2} and further
studied by Gaubert and Gunawardena \cite{GG}.  Friendland, Gaubert
and Han \cite{FGH} gave a weaker definition for irreducible
nonnegative tensors, and established the Perron-Frobenius theorem in
this context.

By using the eigenvalue theory of nonnegative tensors, we give some
properties of $M$-tensors. We prove that the smallest eigenvalue of
an $M$-tensor is positive with a nonnegative eigenvector. Let
$\mathcal{A}$ be a tensor with nonpositive off-diagonal entries. Two
necessary and sufficient conditions for  $\mathcal{A}$ as an
$M$-tensor are given. Specially, we prove that $\mathcal{A}$ is an
$M$-tensor if and only if its all real eigenvalues are positive. We
also give a sufficient condition for a tensor to be an $M$-tensor,
which is easily verified. Finally, we propose an algorithm for
computing the smallest real eigenvalue. The proposed algorithm is
always convergent for any $M$-tensor. Furthermore, we link
$M$-tensors with a class of multivariate forms and then  apply the
proposed method to study the positive definiteness of a multivariate
form. It should be pointed out that the class of multivariate forms
studied in \cite{LZI} is a special case of our model. We do not need
the assumption that the diagonal entries are positive.

This paper is organized as follows. In Section 2, we will recall
some preliminary results. We  will introduce $M$-tensors and
characterize some basic properties of $M$-tensors in Section 3. In
Section 4, we will propose an iterative algorithm for finding the
smallest real eigenvalue of an $M$-tensor,  and some numerical
results are reported.  In Section 5, we will present an algorithm to
for testing positive definiteness of a class of multivariate forms.
We conclude the paper with some remarks in Section 6.

\Section{Preliminaries}

We start this section with some fundamental notions and properties
on tensors. An $m$-order $n$-dimensional tensor $\mathcal{A}$ is
called nonnegative (or, respectively, positive) if $A_{i_1\cdots
i_m}\ge 0$ (or, respectively, $A_{i_1\cdots i_m}> 0$). The tensor
$\mathcal{A}$ is called symmetric if its entries $A_{i_1\cdots i_m}$
are invariant under any permutation of their indices $\{i_1\cdots
i_m\}$ \cite{s4}. The $m$-order $n$-dimensional unit tensor, denoted
by $\mathcal{I}$, is the tensor whose entries are $\delta_{i_1\ldots
i_m}$ with $\delta_{i_1\ldots i_m}=1$ if and only if $i_1=\cdots
=i_m$ and otherwise zero.  A tensor $\mathcal{A}$ is called
reducible, if there exists a nonempty proper index subset $I\subset
\{1,2,\ldots,n\}$ such that
\begin{displaymath}
A_{i_1\cdots i_m} =0,\quad \forall i_1\in I,\quad \forall
i_2,\ldots,i_m\not\in I.
\end{displaymath}
Otherwise, we say $\mathcal{A}$ is irreducible.

Analogous with that of matrices, the theory of eigenvalues and
eigenvectors is one of the fundamental and essential components in
tensor analysis. Wide range of practical applications can be found
in \cite{s2,s9,s10}. Compared with that of matrices, eigenvalue
problems for higher-order tensors are nonlinear due to their
multilinear structure.
\begin{Definition}\label{eigen}
Let $\textrm{C}$ be the complex field. A pair $(\lambda,x)\in
\textrm{C}\times (\textrm{C}^n\backslash\{0\})$ is called an
eigenvalue-eigenvector pair of $\mathcal{A}$, if they satisfy:
\begin{equation}\label{heigen}
 \mathcal{A}x^{m-1} = \lambda x^{[m-1]},
\end{equation}
where $n$-dimensional column vectors $\mathcal{A}x^{m-1}$ and
$x^{[m-1]}$ are defined as
\begin{eqnarray*}
\mathcal{A}x^{m-1} :=\left(\sum^n_{i_2,\ldots,i_m=1}A_{ii_2\cdots
i_m}x_{i_2}\cdots x_{i_m}\right)_{1\le i\le n}\quad \text{and} \quad
\quad x^{[m-1]}:=\left( x^{m-1}_i\right)_{1\le i\le n},
\end{eqnarray*}
respectively.
\end{Definition}
This definition was introduced in \cite{s09,s2,s4}. We call
$\rho(\mathcal{A})$ the spectral radius of tensor $\mathcal{A}$ if
\begin{displaymath}
\rho(\mathcal{A})=\max\{|\lambda|:\, \text{$\lambda$ is an
eigenvalue of $\mathcal{A}$}\},
\end{displaymath}
where $|\lambda|$ denotes the modulus of $\lambda$. An immediate
consequence on the spectral radius follows directly from Corollary 3
in \cite{s4}.
\begin{Lemma}\label{qi}
Let $\mathcal{A}$ be an $m$-order $n$-dimensional tensor. Suppose
that $\mathcal{B}=a(\mathcal{A}+b\mathcal{I})$, where $a$ and $b$
are two real numbers. Then $\mu$ is an eigenvalue of $\mathcal{B}$
if and only if $\mu=a(\lambda+b)$ and $\lambda$ is an eigenvalue of
$\mathcal{A}$. In this case, they have the same eigenvectors.
\end{Lemma}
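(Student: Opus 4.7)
The plan is to apply the eigenvalue equation of Definition~\ref{eigen} directly to $\mathcal{B}$, expand using the linearity of the map $\mathcal{T}\mapsto\mathcal{T}x^{m-1}$ in the tensor argument, and exploit the key identity $\mathcal{I}x^{m-1}=x^{[m-1]}$ that follows from the definition of the unit tensor.

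First I would record the identity $\mathcal{I}x^{m-1}=x^{[m-1]}$ for every $x\in\textrm{C}^n$. This is immediate from the definition of $\mathcal{I}$: the $i$-th component of $\mathcal{I}x^{m-1}$ equals $\sum_{i_2,\ldots,i_m}\delta_{i\,i_2\cdots i_m}x_{i_2}\cdots x_{i_m}$, and the only surviving summand is the one with $i_2=\cdots=i_m=i$, giving $x_i^{m-1}$. Next, from the summation formula for $\mathcal{A}x^{m-1}$ it is clear that the assignment $\mathcal{T}\mapsto\mathcal{T}x^{m-1}$ is linear in the tensor $\mathcal{T}$, so
$$\mathcal{B}x^{m-1}=a(\mathcal{A}+b\mathcal{I})x^{m-1}=a\,\mathcal{A}x^{m-1}+ab\,x^{[m-1]}.$$

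From here everything is algebra. The eigenvalue equation $\mathcal{B}x^{m-1}=\mu x^{[m-1]}$ becomes $a\,\mathcal{A}x^{m-1}=(\mu-ab)x^{[m-1]}$. If $a\neq 0$ this is equivalent to $\mathcal{A}x^{m-1}=\bigl(\tfrac{\mu}{a}-b\bigr)x^{[m-1]}$, so setting $\lambda=\tfrac{\mu}{a}-b$, equivalently $\mu=a(\lambda+b)$, produces an eigenpair $(\lambda,x)$ of $\mathcal{A}$ with the \emph{same} eigenvector. Every step of this manipulation is reversible, which yields both directions of the ``if and only if'' simultaneously and makes the shared-eigenvector claim transparent. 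The edge case $a=0$ is handled trivially: $\mathcal{B}$ is the zero tensor, its only eigenvalue is $0$, and $a(\lambda+b)=0$ for every eigenvalue $\lambda$ of $\mathcal{A}$, so the biconditional holds vacuously.

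There is essentially no obstacle in this proof; the only point that deserves explicit mention is the identity $\mathcal{I}x^{m-1}=x^{[m-1]}$, since without it the linear expansion of $\mathcal{B}x^{m-1}$ would not simplify into the right form. Once that lemma-within-the-lemma is in hand, the result is a one-line rearrangement, which is why the authors are content to cite it as an immediate consequence of Corollary~3 of \cite{s4}.
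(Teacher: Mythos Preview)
Your proof is correct and is precisely the natural direct verification; the paper itself does not supply a proof of this lemma but simply records it as an immediate consequence of Corollary~3 in \cite{s4}, so there is nothing to compare beyond noting that your argument is the expected one. The one minor quibble is the $a=0$ edge case: while the eigenvalue biconditional does hold as you say, the ``same eigenvectors'' clause breaks down there (every nonzero $x$ is an eigenvector of the zero tensor), but this is harmless since the lemma is only ever invoked with $a\neq 0$.
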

Clearly, from this lemma, we have $\rho(\mathcal{B})\le
|a|\left(\rho(\mathcal{A})+|b|\right)$.

Nice properties such as the Perron-Frobenius theorem for eigenvalues of nonnegative square tensors have been established in \cite{s1}.
\begin{Theorem}   \label{thmpf}
If $\mathcal{A}$ is an irreducible nonnegative tensor of order $m$
and dimension $n$, then there exist $\lambda_0 > 0$ and $x_0>0, x_0\in
R^n$ such that
\begin{equation*}
\mathcal{A}x_0^{m-1} = \lambda_0 x^{[m-1]}_0.
\end{equation*}
Moreover, if $\lambda$ is an eigenvalue with a nonnegative
eigenvector, then $\lambda=\lambda_0$. If $\lambda$ is an eigenvalue
of $\mathcal{A}$, then $|\lambda|\le \lambda_0$.
\end{Theorem}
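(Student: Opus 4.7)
The classical route is to reduce the nonlinear eigenproblem \eqref{heigen} to a fixed point problem on the simplex and then use irreducibility to promote the fixed point to a strictly positive one. Define the continuous, positively $1$-homogeneous map $F:R^n_+\to R^n_+$ by
\[
F(x)_i \;=\; \left(\sum_{i_2,\ldots,i_m=1}^n A_{ii_2\cdots i_m}\, x_{i_2}\cdots x_{i_m}\right)^{1/(m-1)}
\]
and, on the simplex $\Delta=\{x\in R^n_+:\sum_i x_i=1\}$, consider $T(x)=F(x)/\|F(x)\|_1$. I would first check that $F(x)\neq 0$ for every $x\in\Delta$: if $F(x)=0$, then the zero pattern of $x$ would induce a proper index set on which $\mathcal{A}$ vanishes in the sense of the reducibility definition, contradicting irreducibility. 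Thus $T$ is a continuous self-map of $\Delta$, and Brouwer's fixed point theorem yields $x_0\in\Delta$ with $F(x_0)=\|F(x_0)\|_1\,x_0$, equivalently $\mathcal{A}x_0^{m-1}=\lambda_0 x_0^{[m-1]}$ with $\lambda_0:=\|F(x_0)\|_1^{m-1}\ge 0$.

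\emph{Positivity and maximality.} To upgrade $x_0\ge 0$ to $x_0>0$, let $J=\{i:(x_0)_i=0\}$ and suppose $J$ were a proper nonempty subset. Reading the eigenvalue equation coordinate-wise on $J$ and using that a sum of nonnegative terms vanishes forces $A_{i_1\cdots i_m}=0$ whenever $i_1\in J$ and at least one $i_k\notin J$, which after a short combinatorial iteration contradicts irreducibility. Positivity of $x_0$ together with the fact that $\mathcal{A}$ is not identically zero on $R^n_+$ gives $\lambda_0>0$. For the remaining two statements I would establish a Collatz--Wielandt type formula
\[
\lambda_0 \;=\; \max_{x>0}\,\min_{1\le i\le n}\frac{(\mathcal{A}x^{m-1})_i}{x_i^{m-1}} \;=\; \min_{x>0}\,\max_{1\le i\le n}\frac{(\mathcal{A}x^{m-1})_i}{x_i^{m-1}},
\]
by showing both sides are attained at the Perron pair $(\lambda_0,x_0)$ via homogeneity and direct comparison. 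If $(\mu,z)$ is another eigenpair with $z\ge 0$, evaluating the ratio on the support of $z$ pins $\mu=\lambda_0$; for a complex eigenpair $(\lambda,y)$, applying the triangle inequality componentwise to $\mathcal{A}y^{m-1}=\lambda y^{[m-1]}$ yields $(\mathcal{A}|y|^{m-1})_i\ge |\lambda|\,|y_i|^{m-1}$, and the min-form of the formula (applied to $|y|+\varepsilon x_0$ to ensure strict positivity, then $\varepsilon\to 0$) gives $|\lambda|\le \lambda_0$.

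\emph{Main obstacle.} The hardest step is the positivity of $x_0$. In the matrix case, irreducibility is equivalent to $(I+A)^{n-1}$ being strictly positive, which makes the argument one-line. For a tensor $\mathcal{A}$ the map $x\mapsto \mathcal{A}x^{m-1}$ is nonlinear, so one cannot simply invoke powers; instead one must iterate $F$ and carefully track how the support of the iterates grows under the hypothesis that a fixed coordinate stays zero, then extract an invariant index set to contradict irreducibility. This bookkeeping, rather than the fixed-point existence itself, is where the bulk of the care lies.
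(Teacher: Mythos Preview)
The paper does not prove this theorem at all: it is quoted in Section~2 as a preliminary result from Chang, Pearson and Zhang \cite{s1}, with no proof given. So there is no ``paper's own proof'' to compare against. Your sketch follows essentially the same Brouwer/Collatz--Wielandt route as the original proof in \cite{s1}.

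One correction worth making, since you flag it as the main obstacle: the positivity of $x_0$ is in fact immediate and needs no iteration. If $J=\{i:(x_0)_i=0\}$ is a nonempty proper subset, then for $i\in J$ the eigenvalue equation gives $(\mathcal{A}x_0^{m-1})_i=0$, and nonnegativity forces $A_{ii_2\cdots i_m}=0$ whenever \emph{all} of $i_2,\ldots,i_m$ lie outside $J$ (not ``at least one'', as you wrote). But this is precisely the reducibility condition in the paper with $I=J$, so you get the contradiction in one step. The same one-step argument works for matrices; the $(I+A)^{n-1}>0$ trick is convenient but not needed here. Your assessment that the nonlinear case requires tracking support growth under iteration of $F$ is therefore an overestimate of the difficulty.
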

Clearly, by this theorem, $\lambda_0$ is the largest eigenvalue of  $\mathcal{A}$.

 Yang and Yang \cite{s11} asserted that the
spectral radius of a nonnegative tensor is an eigenvalue. In the following we state the results of
\cite[Theorem 2.3 and Lemma 5.8]{s11} for reference.
\begin{Theorem}   \label{YY}
Assume that  $\mathcal{A}$ is a nonnegative tensor of order m
dimension n, then $\rho(\mathcal{A})$ is an eigenvalue of
$\mathcal{A}$ with a nonzero nonnegative eigenvector. Moreover, for
any $x>0, x\in R^n$ we have
\begin{equation*}
\min_{1\le i\le n}\dfrac{(\mathcal{A}x^{m-1})_i}{x^{m-1}_i}\le
\rho(\mathcal{A})\le \max_{1\le i\le
n}\dfrac{(\mathcal{A}x^{m-1})_i}{x^{m-1}_i}.
\end{equation*}
\end{Theorem}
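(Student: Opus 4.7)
The plan is to reduce the theorem to the positive (hence irreducible) case via an $\epsilon$-perturbation and then pass to the limit, invoking Theorem \ref{thmpf} for the perturbed tensors.

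First, I would set $\mathcal{A}_\epsilon := \mathcal{A} + \epsilon \mathcal{E}$ for $\epsilon>0$, where $\mathcal{E}$ is the all-ones tensor. Each $\mathcal{A}_\epsilon$ is positive, hence irreducible, so Theorem \ref{thmpf} supplies a positive eigenpair $(\lambda_\epsilon,x_\epsilon)$ with $\lambda_\epsilon=\rho(\mathcal{A}_\epsilon)$ and $x_\epsilon>0$; normalize so that $\|x_\epsilon\|_\infty=1$.

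Next, I would establish the upper bound in the Collatz--Wielandt inequality for any nonnegative tensor $\mathcal{B}$: for any $z>0$, $\rho(\mathcal{B})\le \max_i (\mathcal{B}z^{m-1})_i/z_i^{m-1}$. Given an eigenpair $(\mu,y)$ of $\mathcal{B}$ with $|\mu|=\rho(\mathcal{B})$, the triangle inequality combined with the nonnegativity of $\mathcal{B}$ gives $|\mu|\,|y_i|^{m-1}\le(\mathcal{B}|y|^{m-1})_i$. Taking $i^*$ to maximize $|y_i|/z_i$ and $t^*=|y_{i^*}|/z_{i^*}>0$, the pointwise bound $|y_i|\le t^*z_i$ and nonnegativity of $\mathcal{B}$ yield $(\mathcal{B}|y|^{m-1})_{i^*}\le(t^*)^{m-1}(\mathcal{B}z^{m-1})_{i^*}$, which after dividing by $|y_{i^*}|^{m-1}=(t^*z_{i^*})^{m-1}$ gives the claim. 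Applied to $\mathcal{A}$ with $z=x_\epsilon$ and using $\mathcal{A}x_\epsilon^{m-1}\le \mathcal{A}_\epsilon x_\epsilon^{m-1}=\lambda_\epsilon x_\epsilon^{[m-1]}$, this shows $\rho(\mathcal{A})\le\lambda_\epsilon$.

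For the existence of a nonnegative Perron eigenvector, I would extract a subsequence $\epsilon_k\to0$ along which $x_{\epsilon_k}\to x_0$ (with $\|x_0\|_\infty=1$, so $x_0\neq 0$) and $\lambda_{\epsilon_k}\to\lambda^*$; boundedness of $\lambda_\epsilon$ follows by applying the Collatz--Wielandt upper bound to $\mathcal{A}_\epsilon$ at a fixed $z>0$. Passing to the limit in $\mathcal{A}_{\epsilon_k}x_{\epsilon_k}^{m-1}=\lambda_{\epsilon_k}x_{\epsilon_k}^{[m-1]}$ yields $\mathcal{A}x_0^{m-1}=\lambda^*x_0^{[m-1]}$, so $\lambda^*\le\rho(\mathcal{A})$; combined with $\rho(\mathcal{A})\le\lambda_{\epsilon_k}$ for all $k$, we conclude $\lambda^*=\rho(\mathcal{A})$ and $x_0$ is the desired eigenvector. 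For the lower Collatz--Wielandt bound, I would prove the analogous inequality $\min_i(\mathcal{A}_\epsilon z^{m-1})_i/z_i^{m-1}\le \rho(\mathcal{A}_\epsilon)$ for the irreducible $\mathcal{A}_\epsilon$ by a contradiction argument: if $\alpha$ exceeded $\rho(\mathcal{A}_\epsilon)$, rescaling the positive Perron eigenvector $y$ of $\mathcal{A}_\epsilon$ so that $cy\le z$ with equality in some coordinate and exploiting componentwise monotonicity of the map $w\mapsto \mathcal{A}_\epsilon w^{m-1}$ on the positive cone would yield $\lambda_\epsilon\ge\alpha$, a contradiction; then letting $\epsilon\to0$ and using continuity together with $\rho(\mathcal{A}_\epsilon)\to\rho(\mathcal{A})$ (already established) gives the result for $\mathcal{A}$.

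The main obstacle is the lower Collatz--Wielandt bound: unlike the upper bound, which follows cleanly from the modulus trick, the lower bound requires actually comparing an arbitrary test vector $z$ with the Perron eigenvector, and propagating the inequality through the nonlinear operator $w\mapsto\mathcal{A}_\epsilon w^{m-1}$; the argument hinges on homogeneity of degree $m-1$ together with strict positivity of $\mathcal{A}_\epsilon$ to ensure the comparison is preserved in every coordinate.
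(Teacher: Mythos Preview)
The paper does not prove Theorem~\ref{YY} at all: it is quoted verbatim from Yang and Yang~\cite{s11} (Theorem~2.3 and Lemma~5.8 there) as a preliminary result, with no argument supplied. So there is no ``paper's own proof'' to compare against; your proposal is in fact supplying a proof where the paper simply cites one.

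That said, your outline is essentially the Yang--Yang argument and is sound: perturb to a positive tensor, use the irreducible Perron--Frobenius theorem (Theorem~\ref{thmpf}) on $\mathcal{A}_\epsilon$, and pass to the limit by compactness; this is also exactly what the paper's Lemma~\ref{YY1} encodes. Your derivation of the upper Collatz--Wielandt bound via the modulus trick is clean and correct, and the compactness step for the nonnegative eigenvector is fine.

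One small slip: in your lower-bound sketch you scale the Perron eigenvector $y$ so that $cy\le z$ with equality at some coordinate $j$. That direction gives $(\mathcal{A}_\epsilon z^{m-1})_j\ge \lambda_\epsilon z_j^{m-1}$, which does not directly bound the minimum from above. The correct comparison is to take $c=\max_i z_i/y_i$, so that $cy\ge z$ with equality at some $j$; monotonicity and homogeneity then give $(\mathcal{A}_\epsilon z^{m-1})_j\le c^{m-1}\lambda_\epsilon y_j^{m-1}=\lambda_\epsilon z_j^{m-1}$, whence $\min_i(\mathcal{A}_\epsilon z^{m-1})_i/z_i^{m-1}\le\lambda_\epsilon$. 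With that fix, letting $\epsilon\to 0$ (using the convergence $\rho(\mathcal{A}_\epsilon)\to\rho(\mathcal{A})$ you already established) completes the argument.
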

The following continuity of the spectral radius was
given in  the proof of \cite[Theorem
2.3]{s11}.
\begin{Lemma}   \label{YY1}
Let  $\mathcal{A}$ be a nonnegative tensor of order m and dimension
n, and $\varepsilon>0$ be a sufficiently small number. If
$\mathcal{A}_\varepsilon= \mathcal{A}+ \mathcal{E}$ where
$\mathcal{E}$ denotes the tensor with every entry being
$\varepsilon$, then
\begin{equation*}
\lim_{\varepsilon\to
0}\rho(\mathcal{A}_\varepsilon)=\rho(\mathcal{A}).
\end{equation*}
\end{Lemma}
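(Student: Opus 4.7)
The plan is to combine the Perron--Frobenius Theorem \ref{thmpf} for the strictly positive (hence irreducible) tensor $\mathcal{A}_\varepsilon$ with a compactness argument as $\varepsilon\to 0^+$. Since every entry of $\mathcal{A}_\varepsilon$ is at least $\varepsilon>0$, Theorem \ref{thmpf} provides $\lambda_\varepsilon:=\rho(\mathcal{A}_\varepsilon)>0$ together with a Perron vector $x_\varepsilon>0$ satisfying $\mathcal{A}_\varepsilon x_\varepsilon^{m-1}=\lambda_\varepsilon x_\varepsilon^{[m-1]}$. I would normalize so that $\sum_i x_{\varepsilon,i}=1$, placing $\{x_\varepsilon\}$ in the compact unit simplex.

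Next I would show that $\varepsilon\mapsto\lambda_\varepsilon$ is non-decreasing and uniformly bounded. For $0<\varepsilon_1\le\varepsilon_2$, the upper bound in Theorem \ref{YY} applied with $x=x_{\varepsilon_2}$, combined with the entrywise inequality $\mathcal{A}_{\varepsilon_1}\le\mathcal{A}_{\varepsilon_2}$, gives
\begin{equation*}
\lambda_{\varepsilon_1}\le\max_{1\le i\le n}\frac{(\mathcal{A}_{\varepsilon_1}x_{\varepsilon_2}^{m-1})_i}{x_{\varepsilon_2,i}^{m-1}}\le\max_{1\le i\le n}\frac{(\mathcal{A}_{\varepsilon_2}x_{\varepsilon_2}^{m-1})_i}{x_{\varepsilon_2,i}^{m-1}}=\lambda_{\varepsilon_2},
\end{equation*}
while the same bound with $x=\mathbf{1}$ shows $\lambda_\varepsilon$ is uniformly bounded for $\varepsilon\in(0,1]$. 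Hence $L:=\lim_{\varepsilon\to 0^+}\lambda_\varepsilon$ exists and is finite.

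I would then extract a sequence $\varepsilon_k\to 0^+$ along which $x_{\varepsilon_k}\to x^*$ in the simplex, so $x^*\ge 0$ and $x^*\ne 0$. Passing to the limit in the eigenvalue equation $\mathcal{A}_{\varepsilon_k}x_{\varepsilon_k}^{m-1}=\lambda_{\varepsilon_k}x_{\varepsilon_k}^{[m-1]}$, via continuity of the polynomial map $x\mapsto\mathcal{A}x^{m-1}$ and of $\mathcal{A}_\varepsilon$ in $\varepsilon$, yields $\mathcal{A}(x^*)^{m-1}=L(x^*)^{[m-1]}$. Thus $L$ is an eigenvalue of $\mathcal{A}$ with a nonzero nonnegative eigenvector, so $L\le\rho(\mathcal{A})$. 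For the reverse direction, take any eigenvalue $\lambda$ of $\mathcal{A}$ with eigenvector $y\ne 0$; componentwise absolute values and $\mathcal{A}\le\mathcal{A}_\varepsilon$ yield $|\lambda|\,|y_i|^{m-1}\le(\mathcal{A}_\varepsilon|y|^{m-1})_i$ for every $i$. A Collatz--Wielandt step then gives $|\lambda|\le\lambda_\varepsilon$, and sending $\varepsilon\to 0^+$ yields $\rho(\mathcal{A})\le L$, so $L=\rho(\mathcal{A})$ as claimed.

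The hard part is the Collatz--Wielandt step in the reverse direction: the min bound in Theorem \ref{YY} requires a strictly positive test vector, so some care is needed if $|y|$ has zero components. A clean resolution is to observe that since $\mathcal{A}_\varepsilon$ is strictly positive, $\mathcal{A}_\varepsilon|y|^{m-1}>0$ whenever $|y|\ne 0$, which lets one pass to a strictly positive iterate to which the min bound of Theorem \ref{YY} applies directly, recovering $|\lambda|\le\lambda_\varepsilon$.
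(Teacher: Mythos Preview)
Your argument is correct. Note, however, that the paper does not supply its own proof of this lemma: it merely records that the continuity statement appears inside the proof of Theorem~2.3 in Yang and Yang~\cite{s11}. Your compactness argument is precisely the standard one used there---normalize the Perron vectors of the positive perturbations, extract a convergent subsequence on the simplex, and pass to the limit in the eigenvalue equation---so the overall route is the same.

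One small simplification is available in your reverse inequality. You already proved monotonicity $\lambda_{\varepsilon_1}\le\lambda_{\varepsilon_2}$ for $0<\varepsilon_1\le\varepsilon_2$ by plugging the strictly positive Perron vector $x_{\varepsilon_2}$ into the upper bound of Theorem~\ref{YY} for the tensor $\mathcal{A}_{\varepsilon_1}$. That argument works verbatim with $\varepsilon_1=0$: since $x_\varepsilon>0$, Theorem~\ref{YY} applied to the nonnegative tensor $\mathcal{A}$ itself gives
\[
\rho(\mathcal{A})\le\max_{1\le i\le n}\frac{(\mathcal{A}x_\varepsilon^{m-1})_i}{x_{\varepsilon,i}^{m-1}}\le\max_{1\le i\le n}\frac{(\mathcal{A}_\varepsilon x_\varepsilon^{m-1})_i}{x_{\varepsilon,i}^{m-1}}=\lambda_\varepsilon,
\]
so $\rho(\mathcal{A})\le L$ follows immediately without going through an arbitrary eigenvector $y$ and the iterated Collatz--Wielandt step. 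Your iterate argument is nonetheless valid: from $|\lambda|\,|y|^{[m-1]}\le\mathcal{A}_\varepsilon|y|^{m-1}$ and strict positivity of $\mathcal{A}_\varepsilon$, setting $v=(\mathcal{A}_\varepsilon|y|^{m-1})^{[1/(m-1)]}>0$ one gets $|\lambda|^{1/(m-1)}|y|\le v$, hence $|\lambda|\,v^{[m-1]}=|\lambda|\,\mathcal{A}_\varepsilon|y|^{m-1}\le\mathcal{A}_\varepsilon v^{m-1}$, and then the min bound of Theorem~\ref{YY} applies to the strictly positive $v$.
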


Now we state an iterative algorithm for calculating the largest
eigenvalue of a nonnegative tensor $\cal {A}$, which is proposed in
\cite{LZI, s12} based on Theorems \ref{thmpf} and \ref{YY}.
\begin{algorithm}\label {al31}
\begin{description}
\item[]
\item [{Step 0.}] Choose $x^{(1)} > 0$, $x^{(1)}\in R^n$ and $\rho > 0$.
             Let $\cal {B} =  \cal {A} + \rho
\cal {I}$,  and set $k := 1$.

\item [{Step 1.}] Compute
\begin{eqnarray*}
y^{(k)} &=&  {\cal B}\left(x^{(k)}\right)^{m-1},  \\
\underline{\lambda}_{k} &=& {\rm min}_{x^{(k)}_i > 0}
{\left(y^{(k)}\right)_i
\over \left(x^{(k)}_i\right)^{m-1}}, \\
\bar{\lambda}_{k} &=& {\rm max}_{x^{(k)}_i > 0} {\left(
y^{(k)}\right)_i \over \left(x^{(k)}_i\right)^{m-1}}.
\end{eqnarray*}

\item [{Step 2.}] If $\bar{\lambda}_{k} = \underline{\lambda}_{k}$,
then let $\lambda = \bar{\lambda}_{k}$ and stop. Otherwise, compute
\[
x^{(k+1)} = {{\left(y^{(k)}\right)^{\left[1\over {m-1}\right]}}
\over {\left \|{\left(y^{(k)}\right)^{\left[1\over
{m-1}\right]}}\right\|} },
\]
 replace  $k$ by $k+1$ and go to Step 1.

 \item [{Step 3.}] Output $\lambda - \rho$, the largest
eigenvalue of ${\cal A}$.
\end{description}
\end{algorithm}

It is proved in \cite{LZI} that Algorithm \ref{al31} is always
convergent for irreducible nonnegative tensors; see the following
theorem.

\begin{Theorem}\label {thm4} Suppose ${\cal A}$ be an irreducible
nonnegative tensor.  Let $\cal {B} = \cal {A} + \rho \cal {I}$,
where $\rho > 0$. Assume that $\lambda$ is the largest eigenvalue of
${\cal B}$. Then, Algorithm \ref{al31} produces a value of $\lambda$
in a finite number of steps, or generates two sequences
$\{\underline{\lambda}_{k}\}$ and $\{\bar{\lambda}_{k}\}$ which
converge to  $\lambda$. Furthermore, $\lambda - \rho$ is the largest
eigenvalue of ${\cal A}$.
\end{Theorem}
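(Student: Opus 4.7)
The plan is to transfer everything from $\mathcal{A}$ to the shifted tensor $\mathcal{B}=\mathcal{A}+\rho\mathcal{I}$, which inherits irreducibility and nonnegativity but gains strictly positive diagonal entries. By Lemma \ref{qi}, eigenpairs of $\mathcal{A}$ and $\mathcal{B}$ are in one-to-one correspondence via $\lambda_{\mathcal{B}}=\lambda_{\mathcal{A}}+\rho$ with shared eigenvectors, so it suffices to prove that the algorithm produces (in finitely many steps or in the limit) the largest eigenvalue $\lambda$ of $\mathcal{B}$; the last statement $\lambda-\rho=\rho(\mathcal{A})$ is then immediate. By Theorem \ref{thmpf} applied to $\mathcal{B}$, $\lambda>0$ and $\lambda$ is the \emph{unique} eigenvalue of $\mathcal{B}$ admitting a nonnegative eigenvector.

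First I would dispose of the finite-termination case. If $\bar{\lambda}_k=\underline{\lambda}_k$ at some iteration, the common value $\lambda_k$ satisfies $\mathcal{B}(x^{(k)})^{m-1}=\lambda_k(x^{(k)})^{[m-1]}$ coordinatewise, so $(\lambda_k,x^{(k)})$ is an eigenpair. Because $B_{i\cdots i}=A_{i\cdots i}+\rho>0$ and $x^{(1)}>0$, an easy induction using $y^{(k)}_i\ge \rho(x^{(k)}_i)^{m-1}$ shows $x^{(k)}>0$ for every $k$, so this eigenvector is strictly positive and the Perron--Frobenius uniqueness clause forces $\lambda_k=\lambda$.

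Assume now the algorithm never terminates. The bounds in Theorem \ref{YY} applied with $x=x^{(k)}>0$ give $\underline{\lambda}_k\le\lambda\le\bar{\lambda}_k$ for every $k$. The crux of the proof is the monotonicity
\[
\underline{\lambda}_k\ \le\ \underline{\lambda}_{k+1}\ \le\ \lambda\ \le\ \bar{\lambda}_{k+1}\ \le\ \bar{\lambda}_k.
\]
To establish this, I would introduce the homogeneous, order-preserving map $\phi(x):=\bigl(\mathcal{B}x^{m-1}\bigr)^{[1/(m-1)]}$, so that $x^{(k+1)}=\phi(x^{(k)})/\|\phi(x^{(k)})\|$, and exploit the diagonal-dominance provided by $\rho\mathcal{I}$: from $\underline{\lambda}_k(x^{(k)}_i)^{m-1}\le y^{(k)}_i\le\bar{\lambda}_k(x^{(k)}_i)^{m-1}$ one rescales coordinates and substitutes back into $\mathcal{B}(x^{(k+1)})^{m-1}$, using the fact that the purely-diagonal part of $\mathcal{B}$ transfers the ratio $y^{(k)}_i/(x^{(k)}_i)^{m-1}$ \emph{exactly} while the off-diagonal part averages ratios inside $[\underline{\lambda}_k,\bar{\lambda}_k]$. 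This step --- verifying that the shift $\rho\mathcal{I}$ is precisely what turns the Ng--Qi--Zhou ratios into a monotone system --- is where I expect the main technical work; it is also the step that fails for the unshifted iteration and thereby explains why the modification is needed.

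Granted monotonicity, the bounded monotone sequences converge to limits $\underline{\lambda}^{*}\le\lambda\le\bar{\lambda}^{*}$. Since the iterates lie on the unit sphere intersected with the nonnegative orthant, a subsequence $x^{(k_j)}\to x^{*}\ge 0$, $x^{*}\neq 0$. Passing to the limit in the defining relation yields
\[
\mathcal{B}(x^{*})^{m-1}\ \ge\ \underline{\lambda}^{*}(x^{*})^{[m-1]}\quad\text{and}\quad \mathcal{B}(x^{*})^{m-1}\ \le\ \bar{\lambda}^{*}(x^{*})^{[m-1]},
\]
and by a further application of the monotonicity to the subsequence $x^{(k_j+1)}$ (together with continuity of $\phi$), one shows $\mathcal{B}(x^{*})^{m-1}=\underline{\lambda}^{*}(x^{*})^{[m-1]}$ exactly. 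Hence $x^{*}$ is a nonnegative eigenvector of $\mathcal{B}$, and the uniqueness in Theorem \ref{thmpf} gives $\underline{\lambda}^{*}=\lambda$; symmetrically $\bar{\lambda}^{*}=\lambda$. Subtracting $\rho$ completes the proof.
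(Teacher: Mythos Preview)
The paper does not give its own proof of this theorem; it merely quotes the result from \cite{LZI}. So there is no in-paper argument to compare against, and your proposal should be read as an attempted reconstruction of the \cite{LZI} proof.

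Your large-scale architecture is the right one (monotone bounded sequences, compactness, identification of the limit via the uniqueness clause of Theorem~\ref{thmpf}), but you misidentify the role of the shift $\rho\mathcal{I}$. The monotonicity
\[
\underline{\lambda}_k\le\underline{\lambda}_{k+1}\le\lambda\le\bar{\lambda}_{k+1}\le\bar{\lambda}_k
\]
already holds for the \emph{unshifted} Ng--Qi--Zhou iteration: from $\underline{\lambda}_k\,(x^{(k)})^{[m-1]}\le y^{(k)}$ and $\mathcal{B}\ge 0$ one gets, with $z=(y^{(k)})^{[1/(m-1)]}$, that $\mathcal{B}z^{m-1}\ge\underline{\lambda}_k\,\mathcal{B}(x^{(k)})^{m-1}=\underline{\lambda}_k\,z^{[m-1]}$, hence $\underline{\lambda}_{k+1}\ge\underline{\lambda}_k$; no diagonal term is needed. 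Your sentence ``the shift $\rho\mathcal{I}$ is precisely what turns the Ng--Qi--Zhou ratios into a monotone system\ldots it is also the step that fails for the unshifted iteration'' is therefore incorrect.

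What can fail without the shift is the \emph{coincidence of the two limits} $\underline{\lambda}^{*}=\bar{\lambda}^{*}$, i.e.\ the passage from the subsequential limit $x^{*}\ge 0$ to an exact eigenequation. In your write-up this is the line ``by a further application of the monotonicity to the subsequence $x^{(k_j+1)}$\ldots one shows $\mathcal{B}(x^{*})^{m-1}=\underline{\lambda}^{*}(x^{*})^{[m-1]}$ exactly'', which is asserted rather than argued. That is the place where the strictly positive diagonal $B_{i\cdots i}\ge\rho>0$ (together with irreducibility) has to be used, to force the full sequence $\{x^{(k)}\}$ to converge to a strictly positive vector and hence to squeeze $\bar{\lambda}^{*}-\underline{\lambda}^{*}$ to zero. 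The technical weight of \cite{LZI} lies there, not in the monotonicity step; as it stands your proposal puts the emphasis on the wrong place and leaves the genuinely delicate step as a one-line claim.
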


Note that for any nonnegative tensor $\mathcal{A}$,
$\mathcal{A}+\mathcal{E}$ is an irreducible nonnegative tensor.
Therefore, if we set $\mathcal{B}=\mathcal{A}+\rho \cal
{I}+\mathcal{E}$ in Algorithm \ref{al31}, then by Lemma \ref{YY1},
we can prove that the modified algorithm is also convergent in the
similar way.

\Section{$M$-tensors}
We now extend the notion of $M$-matrices to higher-order tensors and introduce the definition  of an $M$-tensor.
\begin{Definition}\label{defmt}
Let $\mathcal{A}$ be an $m$-order and $n$-dimensional tensor.
$\mathcal{A}$ is called an $M$-tensor if there exist a nonnegative
tensor $\mathcal{B}$ and a real number $c>\rho(\mathcal{B})$, where
$\rho(\mathcal{B})$ is the spectral radius of $\mathcal{B}$, such
that
$$\mathcal{A}=c\mathcal{I}-\mathcal{B}.$$
\end{Definition}
 Clearly, when $m = 2$, if $\mathcal{A}$ is an $M$-tensor then
$\mathcal{A}$ is an $M$-matrix.

Note that the off-diagonal entries of an $M$-tensor are nonpositive.
Denote $\mathbb{Z}$ the set of $m$-order and $n$-dimensional real
tensors whose off-diagonal entries are nonpositive.

 Note that eigenvalues defined in Definition \ref{eigen}
exist for an $M$-tensor \cite{s4}. We will show that  every
eigenvalue of an $M$-tensor has a positive real part, and hence all
of its real eigenvalues are positive.
\begin{Theorem}
Let $\mathcal{A}$ be an $M$-tensor and denote by $\tau(\mathcal{A})$
the minimal value of the real part of all eigenvalues of
$\mathcal{A}$. Then  $\tau(\mathcal{A})>0$ is an eigenvalue of
$\mathcal{A}$ with a nonnegative eigenvector. Moreover, there exist
a nonnegative tensor $\mathcal{B}$ and a real number $c>
\rho(\mathcal{B})$ such that
$\tau(\mathcal{A})=c-\rho(\mathcal{B})$. If $\mathcal{A}$ is
irreducible, then $\tau(\mathcal{A})$ is the unique eigenvalue with
a positive eigenvector.
\end{Theorem}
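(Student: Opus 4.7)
The plan is to exploit the decomposition $\mathcal{A}=c\mathcal{I}-\mathcal{B}$ guaranteed by Definition~\ref{defmt} and reduce the entire statement to the Perron--Frobenius results already recorded in Theorem~\ref{thmpf}, Theorem~\ref{YY} and Lemma~\ref{qi}. Writing $\mathcal{A}=-1\cdot(\mathcal{B}+(-c)\mathcal{I})$ and invoking Lemma~\ref{qi} with $a=-1$ and $b=-c$, the eigenvalues of $\mathcal{A}$ are precisely the numbers $\mu=c-\lambda$ as $\lambda$ ranges over the eigenvalues of $\mathcal{B}$, and each such pair of eigenvalues shares the same eigenvectors. In particular $\operatorname{Re}(\mu)=c-\operatorname{Re}(\lambda)$, so minimising the real part over the spectrum of $\mathcal{A}$ is equivalent to maximising $\operatorname{Re}(\lambda)$ over the spectrum of $\mathcal{B}$.

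I would then identify that maximum as $\rho(\mathcal{B})$. The elementary estimate $\operatorname{Re}(\lambda)\le|\lambda|\le\rho(\mathcal{B})$ supplies the upper bound, while Theorem~\ref{YY} shows that $\rho(\mathcal{B})$ is itself an eigenvalue of $\mathcal{B}$ with a nonzero nonnegative eigenvector $x_{0}$. Consequently the minimum is attained, $\tau(\mathcal{A})=c-\rho(\mathcal{B})$, the same $x_{0}$ is a nonnegative eigenvector of $\mathcal{A}$ for $\tau(\mathcal{A})$ by the eigenvector-sharing clause of Lemma~\ref{qi}, and the hypothesis $c>\rho(\mathcal{B})$ in Definition~\ref{defmt} yields $\tau(\mathcal{A})>0$. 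The first two assertions of the theorem are thus settled simultaneously, and the required pair $(\mathcal{B},c)$ is the one already furnished by the $M$-tensor decomposition.

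For the irreducibility claim, I would first observe that $\mathcal{A}$ is irreducible if and only if $\mathcal{B}$ is: the reducibility condition only constrains entries $A_{i_{1}\cdots i_{m}}$ with $i_{1}\in I$ and $i_{2},\ldots,i_{m}\notin I$, so necessarily $i_{1}\ne i_{2}$ and the entry is off-diagonal, and the off-diagonal entries of $\mathcal{A}$ and $\mathcal{B}$ agree up to a sign change. Assuming $\mathcal{B}$ is irreducible, Theorem~\ref{thmpf} asserts that the only eigenvalue of $\mathcal{B}$ admitting a nonnegative (hence positive) eigenvector is $\rho(\mathcal{B})$; translating back through the eigenvector-preserving bijection of Lemma~\ref{qi}, the only eigenvalue of $\mathcal{A}$ that possesses a positive eigenvector is $c-\rho(\mathcal{B})=\tau(\mathcal{A})$.

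The one point I would monitor is the attainment of the infimum defining $\tau(\mathcal{A})$: a priori $\tau$ is merely the minimum real part among (possibly complex) eigenvalues, so one must verify that this infimum is actually realised by a genuine eigenvalue. This is exactly the content of Theorem~\ref{YY} applied to $\mathcal{B}$; apart from that subtlety, the proof is a straightforward transcription through the affine change of variables $\mu=c-\lambda$ together with the standard spectral-radius bound.
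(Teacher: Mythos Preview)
Your proposal is correct and follows essentially the same approach as the paper: both use the decomposition $\mathcal{A}=c\mathcal{I}-\mathcal{B}$, invoke Lemma~\ref{qi} to translate between the spectra of $\mathcal{A}$ and $\mathcal{B}$, bound the real parts via $\operatorname{Re}(\lambda)\le|\lambda|\le\rho(\mathcal{B})$, and use Theorem~\ref{YY} (and Theorem~\ref{thmpf} in the irreducible case) to conclude. Your write-up is in fact slightly more explicit than the paper's on two points---the justification that $\mathcal{A}$ and $\mathcal{B}$ share the same (ir)reducibility because the defining condition involves only off-diagonal entries, and the observation that attainment of $\tau(\mathcal{A})$ hinges on Theorem~\ref{YY}---but the underlying argument is the same.
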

\begin{proof}
Since $\mathcal{A}$ is an $M$-tensor, by Definition \ref{defmt},  there exist a nonnegative
tensor $\mathcal{B}$ and a real number $c> \rho(\mathcal{B})$
such that
$$\mathcal{A}=c\mathcal{I}-\mathcal{B}.$$
Let $\lambda$ be an eigenvalue of $\mathcal{A}$ and $\mathrm{Re}\lambda$ be
the real part of $\lambda$. By Lemma \ref{qi}, $c-\lambda$ is an eigenvalue of $\mathcal{B}$.
Since $\rho(\mathcal{B})$ is the spectral radius of $\mathcal{B}$,
$$
\rho(\mathcal{B})\ge |c-\lambda|\ge c-\mathrm{Re}\lambda>\rho(\mathcal{B})-\mathrm{Re}\lambda,
$$
which implies that $\mathrm{Re}\lambda>0$, and hence
\begin{equation}\label{eqm1}
\rho(\mathcal{B})\ge \max_{\lambda\in \lambda(\mathcal{A})}\{c-\mathrm{Re}\lambda\}
=c-\min_{\lambda\in \lambda(\mathcal{A})}\{\mathrm{Re}\lambda\}
=c-\tau(\mathcal{A}).
\end{equation}
On the other hand, Theorem \ref{YY} shows that $\rho(\mathcal{B})$
is an eigenvalue of $\mathcal{B}$. By Lemma \ref{qi}, $c-\rho(\mathcal{B})$ is a real eigenvalue
of $\mathcal{A}$, and hence
$$c-\rho(\mathcal{B})\ge \tau(\mathcal{A}),$$
which, together with (\ref{eqm1}), implies
$$\tau(\mathcal{A})=c-\rho(\mathcal{B}).$$
Therefore, $\tau(\mathcal{A})$ is an eigenvalue of $\mathcal{A}$.
Since $\rho(\mathcal{B})$ has a nonnegative eigenvector $x^*$, $x^*$
is also an eigenvector of $\tau(\mathcal{A})$. Moreover, if
$\mathcal{A}$ is irreducible, then $\mathcal{B}$ is also
irreducible. Hence $\rho(\mathcal{B})$ is the unique eigenvalue of
$\mathcal{B}$ with a positive eigenvector.  Thus, we complete the
proof.
\end{proof}

 We state a few of the alternative necessary and
sufficient conditions of a tensor in $\mathbb{Z}$ to be an
$M$-tensor.
 \begin{Theorem}\label{mtensor}
A tensor in $\mathbb{Z}$ is an $M$-tensor if and only if any of its
eigenvalues has a positive real part.
\end{Theorem}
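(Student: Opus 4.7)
The plan is to prove both directions separately. The forward direction is essentially a restatement of the previous theorem, so I will focus the argument on the converse.

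For the ``only if'' direction, suppose $\mathcal{A}\in\mathbb{Z}$ is an $M$-tensor. Then the preceding theorem states precisely that $\tau(\mathcal{A})>0$, i.e.\ the minimal real part among all eigenvalues of $\mathcal{A}$ is strictly positive. So any eigenvalue $\lambda$ of $\mathcal{A}$ satisfies $\mathrm{Re}\,\lambda\geq \tau(\mathcal{A})>0$, which is the claim.

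For the ``if'' direction, the strategy is to exhibit the $M$-tensor decomposition $\mathcal{A}=c\mathcal{I}-\mathcal{B}$ explicitly. Because $\mathcal{A}\in\mathbb{Z}$, all off-diagonal entries of $\mathcal{A}$ are nonpositive; so I would pick any real number $c\geq\max_{1\leq i\leq n}A_{ii\cdots i}$ and set $\mathcal{B}:=c\mathcal{I}-\mathcal{A}$. With this choice the off-diagonal entries of $\mathcal{B}$ equal $-A_{i_1\cdots i_m}\geq 0$, and the diagonal entries of $\mathcal{B}$ equal $c-A_{ii\cdots i}\geq 0$, so $\mathcal{B}$ is a nonnegative tensor. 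It then remains to verify $c>\rho(\mathcal{B})$, which is the content of the $M$-tensor definition.

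The key observation is that the hypothesis on eigenvalues of $\mathcal{A}$ translates into a statement about one specific real eigenvalue of $\mathcal{B}$. By Theorem~\ref{YY}, $\rho(\mathcal{B})$ is itself an eigenvalue of $\mathcal{B}$ (this is where the Yang--Yang extension of Perron--Frobenius to general nonnegative tensors is essential, since I am not assuming irreducibility). Applying Lemma~\ref{qi} with the constants $a=-1$ and $b=-c$ that invert the construction, the corresponding eigenvalue of $\mathcal{A}$ is $\lambda^\ast:=c-\rho(\mathcal{B})$, which is real. By hypothesis, every eigenvalue of $\mathcal{A}$ has positive real part, hence $\lambda^\ast=\mathrm{Re}\,\lambda^\ast>0$, i.e.\ $c>\rho(\mathcal{B})$. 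Together with the nonnegativity of $\mathcal{B}$, this certifies $\mathcal{A}$ as an $M$-tensor per Definition~\ref{defmt}.

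The main obstacle, and the place where I would be most careful, is ensuring that $\rho(\mathcal{B})$ actually arises as an eigenvalue of $\mathcal{B}$ without any irreducibility assumption. This is exactly what Theorem~\ref{YY} (the Yang--Yang result) provides; without it one would only know $\rho(\mathcal{B})\geq|c-\lambda|$ for every eigenvalue $\lambda$ of $\mathcal{A}$, which by itself does not preclude $\rho(\mathcal{B})=c$ (and hence does not force $c>\rho(\mathcal{B})$). Using that $\rho(\mathcal{B})$ is attained resolves this cleanly.
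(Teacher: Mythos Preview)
Your proof is correct and follows essentially the same approach as the paper. For the ``if'' direction you do exactly what the paper does: set $\mathcal{B}=c\mathcal{I}-\mathcal{A}$ with $c\geq\max_i A_{ii\cdots i}$, invoke Theorem~\ref{YY} to get that $\rho(\mathcal{B})$ is an eigenvalue, transfer it via Lemma~\ref{qi} to the real eigenvalue $c-\rho(\mathcal{B})$ of $\mathcal{A}$, and conclude $c>\rho(\mathcal{B})$ from the hypothesis. For the ``only if'' direction the paper reproves the inequality $|c-\lambda|\geq c-\mathrm{Re}\,\lambda>\rho(\mathcal{B})$ by contradiction, whereas you simply cite the preceding theorem; this is cleaner but amounts to the same argument.
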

\begin{proof} Let $\mathcal{A}\in \mathbb{Z}$,
and suppose that every eigenvalue of $\mathcal{A}$ has a positive
real part. Let $a=\max\limits_{1\le i\le n}\{A_{i\ldots i}\}$. Then
$\mathcal{B}=a\mathcal{I}-\mathcal{A}$ is nonnegative. By Lemma \ref{qi} and Theorem \ref{YY},
$a-\rho(\mathcal{B})$ is a real eigenvalue of $\mathcal{A}$. It
follows that $a-\rho(\mathcal{B})> 0$. Thus
$\mathcal{A}=a\mathcal{I}-\mathcal{B}$ is an $M$-tensor.

Now, let $\mathcal{A}=c\mathcal{I}-\mathcal{B}$ be an $M$-tensor.
Then, $\mathcal{B}$ is nonnegative and $c> \rho(\mathcal{B})$. Let
$\lambda$ be an eigenvalue of $\mathcal{A}$. Let
$\mathrm{Re}\lambda$ be the real part of $\lambda$. Suppose
$\mathrm{Re}\lambda\le 0$. Since $\lambda$ is an eigenvalue of
$\mathcal{A}$, there exists a nonzero vector $x\in \textrm{C}^n$
such that
 \begin{equation*}
 \mathcal{A}x^{m-1} = \lambda x^{[m-1]},
\end{equation*}
which yields
 \begin{equation*}
 \mathcal{B}x^{m-1} =(c-\lambda) x^{[m-1]}.
\end{equation*}
It follows from Definition 1.1 that $c-\lambda$ is an eigenvalue of
$\mathcal{B}$. Since $c> \rho(\mathcal{B})$ and
$\mathrm{Re}\lambda\le 0$, we have
\begin{equation*}
|c-\lambda|\ge c-\mathrm{Re}\lambda\ge c> \rho(\mathcal{B}),
\end{equation*}
which contradicts the maximality of  $\rho(\mathcal{B})$. We
complete the proof.
 \end{proof}

Another equivalent condition for an $M$-tensor is presented as
follows, which is more easily certified than the one in Theorem
\ref{mtensor}.
 \begin{Theorem}\label{mtensor1}
A tensor $\mathcal{A}\in \mathbb{Z}$ is an $M$-tensor if and only if
all of its  real eigenvalues  are positive.
\end{Theorem}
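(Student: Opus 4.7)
The forward direction is free: if every eigenvalue of $\mathcal{A}$ has positive real part (Theorem \ref{mtensor}), then in particular every real eigenvalue, being equal to its own real part, is positive. So only the converse requires any argument, and that argument is essentially a recycling of the ``if'' direction in the proof of Theorem \ref{mtensor}.

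For the nontrivial direction, my plan is to suppose $\mathcal{A}\in\mathbb{Z}$ has all real eigenvalues positive, and to exhibit $\mathcal{A}$ explicitly as $c\mathcal{I}-\mathcal{B}$ with $\mathcal{B}\ge 0$ and $c>\rho(\mathcal{B})$. Following the same construction as before, set
$$a=\max_{1\le i\le n}A_{i\cdots i},\qquad \mathcal{B}=a\mathcal{I}-\mathcal{A}.$$
Because $\mathcal{A}\in\mathbb{Z}$, the off-diagonal entries of $\mathcal{B}$ equal $-A_{i_1\cdots i_m}\ge 0$, and the diagonal entries equal $a-A_{i\cdots i}\ge 0$ by choice of $a$. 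Hence $\mathcal{B}$ is a nonnegative tensor.

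Now I invoke the Yang--Yang half of Perron--Frobenius (Theorem \ref{YY}): $\rho(\mathcal{B})$ is itself an eigenvalue of $\mathcal{B}$. By Lemma \ref{qi} applied with the parameters $a=1$, $b=-a$, the number $a-\rho(\mathcal{B})$ is then a \emph{real} eigenvalue of $\mathcal{A}$. By the standing hypothesis all real eigenvalues of $\mathcal{A}$ are positive, so $a-\rho(\mathcal{B})>0$, i.e., $a>\rho(\mathcal{B})$. By Definition \ref{defmt} this is exactly the statement that $\mathcal{A}=a\mathcal{I}-\mathcal{B}$ is an $M$-tensor.

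There is essentially no obstacle; the only subtlety worth remarking on is conceptual: the present hypothesis (all \emph{real} eigenvalues positive) is a priori weaker than the hypothesis of Theorem \ref{mtensor} (every eigenvalue has positive real part), yet it still suffices. The reason is that the construction only needs to test positivity of a single real eigenvalue, namely the shifted Perron eigenvalue $a-\rho(\mathcal{B})$ produced by Theorem \ref{YY}; the hypothesis kicks in at exactly that one eigenvalue, after which Definition \ref{defmt} closes the argument and, together with Theorem \ref{mtensor}, retroactively forces every eigenvalue of $\mathcal{A}$ to have positive real part.
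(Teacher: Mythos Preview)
Your proof is correct and is essentially identical to the paper's own argument: the necessity is read off from Theorem \ref{mtensor}, and the sufficiency proceeds by setting $a=\max_i A_{i\cdots i}$, forming the nonnegative tensor $\mathcal{B}=a\mathcal{I}-\mathcal{A}$, and using Theorem \ref{YY} together with Lemma \ref{qi} to conclude that $a-\rho(\mathcal{B})$ is a real eigenvalue of $\mathcal{A}$, hence positive. Your added remark explaining why the weaker hypothesis still suffices is a nice clarification but not part of the paper's write-up.
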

\begin{proof}
The necessity of the condition follows directly from Theorem
\ref{mtensor}. Suppose now that all real eigenvalues of
$\mathcal{A}\in \mathbb{Z}$ are positive. Let
$\mathcal{B}=a\mathcal{I}-\mathcal{A}$ where $a=\max\limits_{1\le
i\le n}\{A_{i\ldots i}\}$. Then $\mathcal{B}$ is nonnegative and
hence it follows from Theorem \ref{YY} that $\rho(\mathcal{B})$ is
an eigenvalue of $\mathcal{B}$. By Lemma \ref{qi},
$a-\rho(\mathcal{B})$ is a real eigenvalue of $\mathcal{A}$, it must
be positive. That is, $a>\rho(\mathcal{B})$, and therefore
$\mathcal{A}$ is an $M$-tensor.
 \end{proof}

Clearly, we easily obtain  the following results from the above
theorems.
\begin{Corollary}
If $\mathcal{A}\in \mathbb{Z}$ is an $M$-tensor, then
$\max\limits_{1\le i\le n}\{A_{i\ldots i}\}>0$.
\end{Corollary}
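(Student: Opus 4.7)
The plan is to use the same construction that appears in the proof of Theorem \ref{mtensor1}, applied in the opposite direction: instead of building an $M$-tensor from positivity of real eigenvalues, I will extract positivity of the maximum diagonal entry from the $M$-tensor hypothesis. The key point is that $\mathbb{Z}$ is exactly the class for which shifting by the maximum diagonal entry produces a nonnegative tensor, and for nonnegative tensors the Perron eigenvalue is nonnegative.

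Concretely, set $a=\max_{1\le i\le n}\{A_{i\ldots i}\}$ and define $\mathcal{B}=a\mathcal{I}-\mathcal{A}$. First I would verify that $\mathcal{B}$ is a nonnegative tensor: its off-diagonal entries equal $-A_{i_1\cdots i_m}\ge 0$ because $\mathcal{A}\in\mathbb{Z}$, and its diagonal entries equal $a-A_{i\ldots i}\ge 0$ by the definition of $a$. In particular $\rho(\mathcal{B})\ge 0$.

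Next, by Theorem \ref{YY} the spectral radius $\rho(\mathcal{B})$ is an eigenvalue of $\mathcal{B}$, and by Lemma \ref{qi} applied with the shift $\mathcal{A}=a\mathcal{I}-\mathcal{B}$, the number $a-\rho(\mathcal{B})$ is a real eigenvalue of $\mathcal{A}$. Because $\mathcal{A}$ is an $M$-tensor, Theorem \ref{mtensor1} guarantees that every real eigenvalue of $\mathcal{A}$ is positive, so $a-\rho(\mathcal{B})>0$. Combining this with $\rho(\mathcal{B})\ge 0$ gives $a>0$, which is the desired conclusion.

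There is no real obstacle here; the only thing to be careful about is making sure the invocation of the earlier results is clean, specifically that $\mathcal{B}$ really is nonnegative (this is where the hypothesis $\mathcal{A}\in\mathbb{Z}$ is used in an essential way) and that we appeal to Theorem \ref{mtensor1} rather than trying to reprove positivity of real eigenvalues. The argument is essentially a short corollary of the sufficiency direction of Theorem \ref{mtensor1} read in reverse.
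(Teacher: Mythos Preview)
Your proposal is correct and follows essentially the same approach as the paper's own proof: set $a=\max_i A_{i\ldots i}$, form the nonnegative tensor $\mathcal{B}=a\mathcal{I}-\mathcal{A}$, observe via Theorem~\ref{YY} and Lemma~\ref{qi} that $a-\rho(\mathcal{B})$ is a real eigenvalue of $\mathcal{A}$, and conclude $a>\rho(\mathcal{B})\ge 0$ from Theorem~\ref{mtensor1}. Your write-up is in fact a bit more explicit than the paper's in checking that $\mathcal{B}\ge 0$ and that $\rho(\mathcal{B})\ge 0$, but the argument is the same.
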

\begin{proof} Let $a=\max\limits_{1\le i\le n}\{A_{i\ldots
i}\}$ and $\mathcal{B}=a\mathcal{I}-\mathcal{A}$. Then
$\mathcal{B}\geq 0$ and hence $\rho(\mathcal{B})$ is an eigenvalue
of $\mathcal{B}$. So, $a-\rho(\mathcal{B})$ is a real eigenvalue of
$\mathcal{A}$. Since $\mathcal{A}$ is an $M$-tensor, by Theorem
\ref{mtensor1}, $a-\rho(\mathcal{B})>0$, which implies
$a>\rho(\mathcal{B})\ge 0$.\end{proof}

Note that, If the tensor $\mathcal{A}\in \mathbb{Z}$ then
$\mathcal{A}$ can be deposed into the form
$\mathcal{A}=\lambda\mathcal{I}-\mathcal{B}$ with $\lambda\ge
\max\limits_{1\le i\le n}\{A_{i\ldots i}\}$ and $\mathcal{B}\ge 0$.
Thus, we have the following sufficient and necessary condition for a
tensor being an $M$-tensor, which is a generalization of Theorem 7
in \cite{wood05}.
\begin{Corollary}
Let $\mathcal{A}\in \mathbb{Z}$. Decompose the tensor $\mathcal{A}$
into the form $\mathcal{A}=\lambda\mathcal{I}-\mathcal{B}$, where
$\lambda\ge \max\limits_{1\le i\le n}\{A_{i\ldots i}\}$. Then
$\mathcal{A}$ is an $M$-tensor if and only if
$\lambda>\rho(\mathcal{B})$.
\end{Corollary}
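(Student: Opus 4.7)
The plan is to exploit the existing machinery built up in Theorems \ref{mtensor} and \ref{mtensor1}; the statement should reduce almost immediately to an application of Lemma \ref{qi} together with the Perron--Frobenius statement in Theorem \ref{YY}. First I would verify the preliminary observation that the decomposition actually makes sense: given $\mathcal{A}\in \mathbb{Z}$ and $\lambda \geq \max_{1\le i \le n}\{A_{i\ldots i}\}$, the tensor $\mathcal{B}=\lambda\mathcal{I}-\mathcal{A}$ has off-diagonal entries $-A_{i_1\cdots i_m}\geq 0$ and diagonal entries $\lambda - A_{i\cdots i}\geq 0$, hence $\mathcal{B}\geq 0$. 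So the hypothesis rightly places us in the regime where Theorem \ref{YY} applies to $\mathcal{B}$.

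For the sufficiency direction ($\lambda > \rho(\mathcal{B})\Rightarrow \mathcal{A}$ is an $M$-tensor), the argument is essentially tautological: the decomposition $\mathcal{A}=\lambda\mathcal{I}-\mathcal{B}$ with $\mathcal{B}\geq 0$ and $\lambda > \rho(\mathcal{B})$ is precisely what Definition \ref{defmt} requires, so $\mathcal{A}$ is an $M$-tensor by inspection.

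For the necessity direction, I would assume $\mathcal{A}$ is an $M$-tensor and show $\lambda > \rho(\mathcal{B})$. Since $\mathcal{B}\geq 0$, Theorem \ref{YY} gives that $\rho(\mathcal{B})$ is an eigenvalue of $\mathcal{B}$. Applying Lemma \ref{qi} with $a=-1$ and $b=-\lambda$ (or equivalently, reading the decomposition $\mathcal{A}=\lambda\mathcal{I}-\mathcal{B}$ in reverse), $\lambda - \rho(\mathcal{B})$ is then a real eigenvalue of $\mathcal{A}$. Now, since $\mathcal{A}\in\mathbb{Z}$ is an $M$-tensor, Theorem \ref{mtensor1} forces every real eigenvalue of $\mathcal{A}$ to be positive, so $\lambda - \rho(\mathcal{B}) > 0$, which is exactly the desired inequality.

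I do not anticipate any real obstacle: the main content of the corollary has already been absorbed into Theorem \ref{mtensor1}, and the point here is just to recast that theorem in a form that strips away the existential quantifier hidden in Definition \ref{defmt} and replaces it with a concrete, directly-checkable decomposition parameterized by any admissible $\lambda$. The only minor care-point is noting that the necessity does \emph{not} require $\lambda$ to equal the diagonal-maximum $a = \max_i A_{i\ldots i}$ used inside the proof of Theorem \ref{mtensor1}; the argument goes through verbatim for any $\lambda\geq a$ because Theorem \ref{YY} applies to any nonnegative tensor.
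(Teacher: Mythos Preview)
Your proposal is correct and matches the paper's proof essentially line for line: both verify $\mathcal{B}\ge 0$, invoke Theorem \ref{YY} and Lemma \ref{qi} to obtain the real eigenvalue $\lambda-\rho(\mathcal{B})$ of $\mathcal{A}$, then use Theorem \ref{mtensor1} for necessity and Definition \ref{defmt} directly for sufficiency. Your additional remarks about the decomposition being well defined and about $\lambda$ not needing to equal $\max_i A_{i\ldots i}$ are accurate elaborations, but the underlying argument is the same.
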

\begin{proof} Clearly, $\mathcal{B}\ge 0$. By Theorem \ref{YY},
$\rho(\mathcal{B}$ is an eigenvalue of $\mathcal{B}$. By Lemma
\ref{qi}, $\lambda-\rho(\mathcal{B})$ is a real eigenvalue of
$\mathcal{A}$. If $\mathcal{A}$ be an $M$-tensor then we have, by
Theorem \ref{mtensor1}, $\lambda-\rho(\mathcal{B})>0$, i.e.,
$\lambda>\rho(\mathcal{B})$. If $\lambda>\rho(\mathcal{B})$ then, by
Definition \ref{defmt}, $\mathcal{A}$ is an $M$-tensor.
\end{proof}

Note that this result provides us an easy method for determining
whether a tensor $\mathcal{A}$ is an $M$-tensor. We only need to
compute the spectral radius of the tensor
$\lambda\mathcal{I}-\mathcal{A}$, where $\lambda\ge
\max\limits_{1\le i\le n}\{A_{i\ldots i}\}$.

Finally, we give a sufficient condition for a tensor to be an
$M$-tensor, which is easily verified. First, we state a definition
which is a generalization from matrices to tensors
\cite{DingZhou,Varge}.
\begin{Definition}
Let $\mathcal{A}$ be an $m$-order and $n$-dimensional tensor.
$\mathcal{A}$ is diagonally dominant if
\begin{equation}\label{diag}
\sum^n_{i_2,\ldots,i_m=1,
\delta_{ii_2\ldots i_m}=0}|A_{ii_2\ldots i_m}|\le
|A_{ii\ldots i}|,\quad i=1,2,\ldots,n,
\end{equation}
where the symbol $\delta_{ii_2\ldots i_m}=0$ is defined as the entry
of the unit tensor $\mathcal{I}$. $\mathcal{A}$ is strictly
diagonally dominant if the strict inequality holds in (\ref{diag})
for all $i$. $\mathcal{A}$ is irreducibly diagonally dominant if
$\mathcal{A}$ is irreducible, diagonally dominant, and the strict
inequality  in (\ref{diag}) holds for at least one $i$.
\end{Definition}

\begin{Theorem}
If a tensor $\mathcal{A}\in \mathbb{Z}$ is strictly or irreducibly
diagonally dominant with all nonnegative diagonal entries, then
$\mathcal{A}$ is an $M$-tensor.
\end{Theorem}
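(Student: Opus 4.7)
The plan is to exhibit the decomposition required by Definition \ref{defmt}. Set $a=\max_{1\le i\le n} A_{ii\ldots i}\ge 0$ (nonnegative by hypothesis) and $\mathcal{B}=a\mathcal{I}-\mathcal{A}$: its off-diagonal entries are $-A_{ii_2\cdots i_m}\ge 0$ (since $\mathcal{A}\in\mathbb{Z}$) and its diagonal entries are $a-A_{ii\ldots i}\ge 0$, so $\mathcal{B}$ is a nonnegative tensor. It then suffices to show the strict inequality $a>\rho(\mathcal{B})$.

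For the upper bound on $\rho(\mathcal{B})$ I would invoke Theorem \ref{YY} with the all-ones vector $e=(1,\ldots,1)$, yielding
\begin{equation*}
\rho(\mathcal{B})\le \max_{1\le i\le n}(\mathcal{B}e^{m-1})_i = \max_{1\le i\le n}\Bigl[(a-A_{ii\ldots i})+\sum_{(i_2,\ldots,i_m)\ne(i,\ldots,i)}|A_{ii_2\cdots i_m}|\Bigr].
\end{equation*}
In the strictly diagonally dominant case the bracketed expression is strictly less than $a$ for every $i$, so $\rho(\mathcal{B})<a$ and the proof is finished at once.

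The irreducibly diagonally dominant case is the main obstacle, since the same estimate yields only $\rho(\mathcal{B})\le a$. I would argue by contradiction: assume $\rho(\mathcal{B})=a$. Since reducibility depends only on the off-diagonal nonzero pattern, which is preserved when passing from $\mathcal{A}$ to $\mathcal{B}$, the tensor $\mathcal{B}$ is irreducible; Theorem \ref{thmpf} then supplies a positive eigenvector $x>0$ with $\mathcal{B}x^{m-1}=a\,x^{[m-1]}$, equivalently $\mathcal{A}x^{m-1}=0$. After rescaling so that $\max_j x_j=1$, let $S=\{i:x_i=1\}\ne\emptyset$. Writing $(\mathcal{A}x^{m-1})_k=0$ for each $k\in S$ and combining the sign condition on off-diagonal entries, the bound $0<x_j\le 1$, and the weak diagonal dominance inequality forces equality throughout the resulting chain, producing two consequences: (a) diagonal dominance holds with equality at every $k\in S$, and (b) whenever $A_{ki_2\cdots i_m}\ne 0$ with $(i_2,\ldots,i_m)\ne(k,\ldots,k)$, all of $i_2,\ldots,i_m$ lie in $S$.

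Finally I would use the irreducibility of $\mathcal{A}$: if $S$ were a proper subset of $\{1,\ldots,n\}$, applying the reducibility definition to $I=S$ would yield some $i_1\in S$ and $i_2,\ldots,i_m\notin S$ with $A_{i_1i_2\cdots i_m}\ne 0$, contradicting (b). Hence $S=\{1,\ldots,n\}$ and $x=e$, in which case (a) says diagonal dominance holds with equality at every index, contradicting the hypothesis that the strict inequality holds for at least one $i$. This contradiction gives $\rho(\mathcal{B})<a$ and completes the argument.
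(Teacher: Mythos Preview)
Your proof is correct and takes a genuinely different route from the paper's argument. The paper never writes down the decomposition $\mathcal{A}=a\mathcal{I}-\mathcal{B}$; instead it fixes an arbitrary (possibly complex) eigenpair $(\lambda,x)$ of $\mathcal{A}$, picks an index where $|x_i|$ is maximal, and runs a Gershgorin-type estimate to obtain $|\lambda-A_{ii\ldots i}|\le |A_{ii\ldots i}|$, from which $\mathrm{Re}\,\lambda>0$ follows (with an extra case split for the irreducibly dominant situation); it then appeals to Theorem~\ref{mtensor}. You instead verify Definition~\ref{defmt} directly: the row-sum bound from Theorem~\ref{YY} handles the strict case, and in the irreducible case you exploit the positive Perron eigenvector of $\mathcal{B}$ guaranteed by Theorem~\ref{thmpf}, reducing everything to real, positive arithmetic. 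Your equality-forcing argument at indices in $S$, followed by irreducibility to show $S=\{1,\ldots,n\}$, is exactly the right mechanism and parallels the paper's case analysis on the set of maximal-modulus coordinates, but is cleaner because you never have to take absolute values of complex quantities. What the paper's approach buys is that it establishes $\mathrm{Re}\,\lambda>0$ for every eigenvalue along the way; what your approach buys is a shorter, purely real argument that goes straight to the defining inequality $a>\rho(\mathcal{B})$.
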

\begin{proof}
Let $\lambda$ be an eigenvalue of $\mathcal{A}$ with a nonzero eigenvector $x$. Denote
$$
|x_i|=\max_{1\le j\le n}|x_j|.
$$
Then
\begin{equation}\label{lag}
\sum_{i_2,\ldots,i_m=1}^nA_{i i_2\ldots i_m}x_{i_2}\cdots x_{i_m}=\lambda x_i^{m-1},
\end{equation}
which implies that
\begin{equation*}
|\lambda-A_{ii\ldots i}|\le \sum^n_{i_2,\ldots,i_m=1;
\delta_{ii_2\ldots i_m}=0}|A_{ii_2\ldots i_m}|.
\end{equation*}
Hence, the diagonal dominance of  $\mathcal{A}$ implies that
\begin{equation}
|\mathrm{Re}\lambda-A_{i\ldots i}|\le |\lambda-A_{i\ldots i}|\le |A_{i\ldots i}|.\label{equ123}
\end{equation}
Since $A_{j\ldots j}\ge
0$ for $j=1,2,\ldots,n$,  (\ref{equ123})
yields
\begin{equation}\label{reeig}
\mathrm{Re}\lambda-A_{i\ldots i}\ge -A_{i\ldots i}.
\end{equation}

Suppose that $\mathcal{A}$ is strictly diagonally dominant. Then the
strict inequality holds in (\ref{diag}) for all $j$, so the strict
inequality holds in (\ref{reeig}). This  yields
$\mathrm{Re}\lambda>0$, i.e., any eigenvalues of $\mathcal{A}$  has
a positive real part. By Theorem \ref{mtensor}, $\mathcal{A}\in
\mathbb{Z}$ is an $M$-tensor.

Suppose now that $\mathcal{A}$ is irreducibly diagonally dominant.
Define
\begin{equation*}
J=\{l:\, |x_l|=\max_{1\le i\le n}|x_i|, \, |x_l|>|x_i| \text{for
some $i$}\}.
\end{equation*}
If $J=\emptyset$, then (\ref{lag}) and the diagonal dominance of
$\mathcal{A}$ imply that for $i=1,2,\ldots,n$,
\begin{equation*}
|\lambda-A_{ii\ldots i}|\le \sum^n_{i_2,\ldots,i_m=1;
\delta_{ii_2\ldots i_m}=0}|A_{ii_2\ldots i_m}|\le |A_{i i\ldots i}|.
\end{equation*}
Let
\begin{equation*}
|A_{kk\ldots k}|> \sum^n_{i_2,\ldots,i_m=1; \delta_{ki_2\ldots
i_m}=0}|A_{ki_2\ldots i_m}|
\end{equation*}
for some $k$. We have
\begin{equation*}
|\mathrm{Re}\lambda-A_{k\ldots k}|\le |\lambda-A_{k\ldots k}|<
|A_{k\ldots k}|=A_{k\ldots k}
\end{equation*}
which implies that $\mathrm{Re}\lambda>0$.

If $J\not=\emptyset$, then the irreducibility  of $\mathcal{A}$
implies that there exist $l\in J$ and $i_2,\ldots,i_m\not\in J$ such
that$ A_{l i_2\ldots i_m}\ne 0$. Hence (\ref{lag}) yields
\begin{equation*}
|\lambda-A_{ll\ldots l}|\le \sum^n_{i_2,\ldots,i_m=1;
\delta_{li_2\ldots i_m}=0}|A_{li_2\ldots
i_m}|\dfrac{|x_{i_2}|}{|x_l|}\cdots \dfrac{|x_{i_m}|}{|x_l|}<
\sum^n_{i_2,\ldots,i_m=1; \delta_{li_2\ldots i_m}=0}|A_{li_2\ldots
i_m}|\le |A_{l l\ldots l}|,
\end{equation*}
which implies that $\mathrm{Re}\lambda>0$. By Theorem \ref{mtensor},
$\mathcal{A}\in \mathbb{Z}$ is an $M$-tensor.
\end{proof}

\Section{An Algorithm}

 Theorem \ref{mtensor1} shows
that a tensor $\mathcal{A}\in \mathbb{Z}$ is an $M$-tensor if and
only if the smallest real eigenvalue of $\mathcal{A}$ is positive.
In this section we propose an algorithm to determine whether or not
a tensor with nonpositive off-diagonal entries is an $M$-tensor.

\begin{Lemma}
Let $\mathcal{A}$ be an $m$-order and $n$-dimensional tensor. Define
\begin{equation}\label{equ41}
L_{\mathcal{A}}=\min_{1\le i\le n}\{A_{i i\ldots i}-C_i\},\quad
U_{\mathcal{A}}=\max_{1\le i\le n}\{A_{i i\ldots i}+C_i\},
\end{equation}
where
\begin{equation*}
C_i=\sum^n_{i_2,\ldots,i_m=1, \delta_{ii_2\ldots
i_m}=0}|A_{ii_2\ldots i_m}|,\quad i=1,2,\ldots,n.
\end{equation*}
Then $L_{\mathcal{A}}$ and $U_{\mathcal{A}}$ are the lower and upper
bounds of real eigenvalues of $\mathcal{A}$, respectively.
\end{Lemma}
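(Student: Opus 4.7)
The plan is to adapt the classical Gershgorin disc argument to the tensor setting, which the eigenvalue equation (\ref{heigen}) makes essentially routine. Let $\lambda$ be a real eigenvalue of $\mathcal{A}$ and $x\in\textrm{C}^n\setminus\{0\}$ a corresponding eigenvector; pick an index $i$ at which $|x_i|=\max_{1\le j\le n}|x_j|$, so in particular $|x_i|>0$.

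First I would isolate the diagonal term in the $i$-th component of $\mathcal{A}x^{m-1}=\lambda x^{[m-1]}$, writing
\begin{equation*}
(\lambda - A_{ii\ldots i})\, x_i^{m-1}
= \sum_{\substack{i_2,\ldots,i_m=1\\ \delta_{ii_2\ldots i_m}=0}}^{n} A_{ii_2\ldots i_m}\, x_{i_2}\cdots x_{i_m}.
\end{equation*}
Next I would take absolute values, use the triangle inequality, and apply the maximality bound $|x_{i_k}|\le |x_i|$ in each factor. After dividing through by $|x_i|^{m-1}>0$ this yields
\begin{equation*}
|\lambda - A_{ii\ldots i}|\;\le\; \sum_{\substack{i_2,\ldots,i_m=1\\ \delta_{ii_2\ldots i_m}=0}}^{n} |A_{ii_2\ldots i_m}| \;=\; C_i.
\end{equation*}

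Finally, since $\lambda$ is real, this bound unfolds into the two-sided inequality $A_{ii\ldots i}-C_i \le \lambda \le A_{ii\ldots i}+C_i$. Taking the minimum of the left-hand side and the maximum of the right-hand side over all indices then gives $L_{\mathcal{A}}\le \lambda \le U_{\mathcal{A}}$, which is the claim.

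I do not anticipate a substantive obstacle: the only subtlety is the bookkeeping involved in separating the diagonal entry from the off-diagonal sum defining $C_i$, and making sure the strict positivity $|x_i|>0$ is available to divide by $|x_i|^{m-1}$. Both are handled by the standard choice of $i$ as the index of maximum modulus, exactly as in the proof of the previous theorem (cf.\ equation (\ref{lag}) and the inequality following it).
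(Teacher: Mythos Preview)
Your proposal is correct and follows essentially the same Gershgorin-type argument as the paper: pick the index of maximal modulus in the eigenvector, isolate the diagonal term in that component of the eigenvalue equation, bound the off-diagonal sum by $C_i$, and read off the two-sided inequality. The paper's proof is slightly more terse but identical in substance.
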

\begin{proof}
Let $\lambda$ be a real eigenvalue of $\mathcal{A}$ with an
eigenvector $x\not=0$. That is,
\begin{equation}\label{eq42}
\sum_{i_2,\ldots,i_m=1}^nA_{i i_2\ldots i_m}x_{i_2}\cdots
x_{i_m}=\lambda x_i^{m-1},\quad i=1,2,\ldots,n.
\end{equation}
Let $|x_k|=\max_{1\le i\le n}|x_i|$. Then (\ref{eq42}) implies that
\begin{equation*}
|\lambda-A_{kk\ldots k}|\le \sum^n_{i_2,\ldots,i_m=1,
\delta_{ki_2\ldots i_m}=0}|A_{ki_2\ldots
i_m}|\dfrac{|x_{i_2}|}{|x_k|}\cdots \dfrac{|x_{i_m}|}{|x_k|}\le C_k,
\end{equation*}
which yields $A_{kk\ldots k}-C_k\le \lambda\le A_{kk\ldots k}+C_k$.
This shows $L_{\mathcal{A}}\le \lambda\le U_{\mathcal{A}}$.
\end{proof}

For a tensor $\mathcal{A}\in \mathbb{Z}$, we define a tensor
$\mathcal{C}$ as
\begin{equation}
\mathcal{C}=U_{\mathcal{A}}\mathcal{I}-\mathcal{A},\label{tensorb}
\end{equation}
where $U_{\mathcal{A}}$ is defined in (\ref{equ41}). Clearly, the
tensor $\mathcal{C}$ is a nonnegative tensor. By Lemma \ref{qi} and
Theorem \ref{thmpf}, $U_{\mathcal{A}}-\rho(\mathcal{C})$ is the
smallest eigenvalue of $\mathcal{A}$. By Theorem \ref{mtensor1} and
Definition \ref{defmt}, if $U_{\mathcal{A}}-\rho(\mathcal{C})$ is
positive then $\mathcal{A}$ is an $M$-tensor. Based on this
observation, in the following, we establish an algorithm for
computing the smallest eigenvalue of a tensor $\mathcal{A}\in
\mathbb{Z}$. If the  smallest eigenvalue is positive then
$\mathcal{A}$ is an $M$-tensor. Otherwise, $\mathcal{A}$ is not an
$M$-tensor.

Based on the above discussion, we propose the following algorithm to
determine whether or not a tensor $\mathcal{A}$ with nonpositive
off-diagonal entries is an $M$-tensor.

\begin{algorithm}\label {al41}
\begin{description}
\item[]
\item [{Step 0.}] Compute the upper bound of real eigenvalues,
$U_{\cal A}$ by the formula (\ref{equ41}) and let ${\cal C} =
U_{\cal A}{\cal I} - {\cal A}$.

\item [{Step 1.}] By using Algorithm \ref{al31}, compute $\lambda$,
the largest eigenvalue of ${\cal C}$.

\item [{Step 2.}] Let $\mu = U_{\cal A}  - \lambda$. If $\mu > 0$ then ${\cal A}$ is
an $M$-tensor. Otherwise, ${\cal A}$ is not an $M$-tensor.
\end{description}
\end{algorithm}

In order to show the viability of Algorithm \ref{al41}, we used
Matlab Version 7.7(R2008b) to test it on some tensors with
nonpositive off-diagonal entries which are randomly generated by the
following procedure.

\noindent {\bf Procedure 1. }
\begin{description}
\item{1. }Give $(m, n, A_d)$, where $n$ and $m$ are the dimension and the order of
the randomly generated tensor, respectively, and $A_d > 0$.
\item{2. }Randomly generate an $m$-order $n$-dimensional tensor $\mathcal{D}$
such that all elements of $\mathcal{D}$ are in the interval $(0,
1)$.
\item{3. }Let $\mathcal{A}=\left(A_{i_1\cdots i_m}\right)$,
where $A_{i\cdots i} = A_d + D_{i\cdots i}$, $i = 1, 2, ..., n$,
otherwise, $A_{i_1\cdots i_m} = - D_{i_1\cdots i_m},\quad 1\le
i_1,\ldots,i_m\le n.$
\end{description}

Our numerical results are reported in Table \ref{tab1}  In this
table, {\bf n} and {\bf m} specify the dimension and the order of
the randomly generated tensor, respectively. $A_d$ is a parameter in
Procedure 1.  Given $(m, n, A_d)$, we generate 100 tensors and
determine whether or not they are $M$-tensors by Algorithm
\ref{al41}. In the {\bf Yes} column we show the number of tensors
which are $M$-tensors. In the {\bf No} column, we give the number of
tensors which are not $M$-tensors. CPU(s) denotes the average
computer time in seconds used for Algorithm \ref{al41}. The results
reported in Table \ref{tab1} show that Algorithm \ref{al41} performs
well for these tensors.

\begin{table}
  \begin{center} \footnotesize
    \begin{tabular}{|lll|lll|}  \hline
    $m$ & $n$ & $A_d$ & Yes & No & CPU(s)
    \\  \hline \hline
 3 & 10 & 5  &  0 & 100 & 0.0025 \\
 3 & 10 & 10  &  0 & 100 & 0.0030 \\
 3 & 10 & 100  & 100 &  0 & 0.0028 \\
 3 & 10 & 1000  & 100 &  0 & 0.0028 \\ \hline
 3 & 20 & 5 &  0 & 100 & 0.0094 \\
 3 & 20 & 10  &  0 & 100 & 0.0102 \\
 3 & 20 & 100  &  0 & 100 & 0.0088 \\
 3 & 20 & 1000  & 100 &  0 & 0.0084 \\ \hline
 3 & 30 & 5  &  0 & 100 & 0.0181 \\
 3 & 30 & 10  &  0 & 100 & 0.0187 \\
 3 & 30 & 100  &  0 & 100 & 0.0186 \\
 3 & 30 & 1000  & 100 &  0 & 0.0195 \\ \hline
 3 & 40 & 5  &  0 & 100 & 0.0352 \\
 3 & 40 & 10  &  0 & 100 & 0.0358 \\
 3 & 40 & 100  &  0 & 100 & 0.0362 \\
 3 & 40 & 1000  & 100 &  0 & 0.0355 \\ \hline
 3 & 50 & 5  &  0 & 100 & 0.0619 \\
 3 & 50 & 10  &  0 & 100 & 0.0619 \\
 3 & 50 & 100  &  0 & 100 & 0.0598 \\
 3 & 50 & 1000  &  0 & 100 & 0.0692 \\ \hline
 4 & 10 & 5 &  0 & 100 & 0.0592 \\
 4 & 10 & 10 &  0 & 100 & 0.0603 \\
 4 & 10 & 100  &  0 & 100 & 0.0611 \\
 4 & 10 & 1000  &  0 & 100 & 0.0620 \\ \hline
 4 & 20 & 5 &  0 & 100 & 0.2945 \\
 4 & 20 & 10  &  0 & 100 & 0.3097 \\
 4 & 20 & 100  &  0 & 100 & 0.3187 \\
 4 & 20 & 1000 &  0 & 100 & 0.3116 \\ \hline
 4 & 30 & 5 &  0 & 100 & 1.3233 \\
 4 & 30 & 10 &  0 & 100 & 1.3125 \\
 4 & 30 & 100  &  0 & 100 & 1.3170 \\
 4 & 30 & 1000  &  0 & 100 & 1.3453 \\ \hline
 4 & 40 & 5  &  0 & 100 & 6.5375 \\
 4 & 40 & 10  &  0 & 100 & 6.5358 \\
 4 & 40 & 100 &  0 & 100 & 6.4925 \\
 4 & 40 & 1000  &  0 & 100 & 6.5520 \\ \hline
 4 & 50 & 5  &  0 & 100 & 15.2086 \\
 4 & 50 & 10  &  0 & 100 & 15.1844 \\
 4 & 50 & 100  &  0 & 100 & 15.2102 \\
 4 & 50 & 1000 &  0 & 100 & 15.2039
    \\ \hline
    \end{tabular}
    \caption{ Output of Algorithm \ref{al41} for randomly generated tensors. }
    \label{tab1}
    \end{center}
\end{table}

\Section{An Application: The positive definiteness of a multivariate
form}

In this section we apply the proposed algorithm for testing the
positive definiteness of a class of multivariate forms.

An $m$th degree homogeneous polynomial form of $n$ variables $f(x)$,
where $x\in R^n$, can be denoted as
\begin{equation}
f(x):=\sum_{i_1,i_2,\ldots,i_m=1}^nA_{i_1i_2\ldots
i_m}x_{i_1}x_{i_2}\cdots x_{i_m}.
\end{equation}
When $m$ is even, $f(x)$  is called \emph{positive definite} if
\begin{equation}
f(x)>0,\quad \forall x\in R^n,\, x\not=0.
\end{equation}
Testing positive definiteness of a multivariate form is an important
problem in the stability study of nonlinear autonomous systems
\cite{LZI3,LZI5,LZI18}. It is proved in \cite{s10} that $f(x)$ is
positive definite if and only if its corresponding tensor
$\mathcal{A}=(A_{i_1\ldots i_m})$ is symmetric and all of real
eigenvalues are positive. That is, testing positive definiteness of
a class of even-order multivariate forms is equivalent to determine
whether or not the even-order symmetric tensor is an $M$-tensor. It
should be pointed out that the positive definiteness of a special
class of  multivariate forms was studied in \cite{LZI}. It can be
regarded as our special case. Some numerical results can be referred
\cite{LZI}. We omit them here.

\Section{Conclusions} We have defined and studied $M$-tensors. An
$M$-tensor is the generation of an $M$-matrix. Many important
characterizations of $M$-matrices has been extended to $M$-tensors.
We have proposed two sufficient and necessary conditions, a
necessary condition, and a sufficient condition of an $M$-tensor. We
have shown that the smallest eigenvalue of an $M$-tensor is
positive, and then proposed an algorithm to determine whether or not
a tensor with nonpositive off-diagonal entries is an $M$-tensor.
Finally, we link an $M$-tensor with a multivariate form and apply
the algorithm to judge the positive definiteness of the multivariate
form. Numerical results are reported.

There are some questions are still in study. For example, whether
the condition ``there exists $x\in R^n, x\ge 0$ such that
$\mathcal{A}x^{m-1}>0$" is a necessary and sufficient condition for
a tensor $\mathcal{A}\in \mathbb{Z}$ to be an $M$-tensor?

\end{document}